\newcommand{\cN}{\mathcal{N}}
\newcommand{\cM}{\mathcal{M}}
\newcommand{\R}{\mathbb{R}}
\newcommand{\Z}{\mathbb{Z}}
\newcommand{\F}{\mathbb{F}}
\newcommand{\Irr}{\mathrm{Irr}}
\newcommand{\dgr}{\mathrm{deg}}
\newcommand{\vep}{\varepsilon}
\newcommand{\SO}{\mathrm{SO}}
\newcommand{\Sp}{\mathrm{Sp}}
\newcommand{\GL}{\mathrm{GL}}
\newcommand{\GU}{\mathrm{U}}
\newcommand{\cP}{\mathcal{P}}
\newcommand{\cS}{\mathcal{S}}
\newcommand{\bG}{\mathbf{G}}
\newcommand{\bT}{\mathbf{T}}
\newtheorem{theorem}{Theorem}[section]
\newtheorem{proposition}{Proposition}[section]
\newtheorem{lemma}{Lemma}[section]
\numberwithin{equation}{section}
\begin{document}

\title [Real representations of finite symplectic groups]{Real representations of finite symplectic groups over fields of characteristic two}

\author{C. Ryan Vinroot}
\address{Department of Mathematics\\College of William and Mary\\ Williamsburg, VA, 23187\\USA}
\email{vinroot@math.wm.edu}


\begin{abstract}  We prove that when $q$ is a power of $2$, every complex irreducible representation of $\Sp(2n, \F_q)$ may be defined over the real numbers, that is, all Frobenius-Schur indicators are 1.  We also obtain a generating function for the sum of the degrees of the unipotent characters of $\Sp(2n, \F_q)$, or of $\SO(2n+1,\F_q)$, for any prime power $q$.
\\
\\
2010 {\it AMS Mathematics Subject Classification}:  20C33, 05A15
\end{abstract}

\maketitle

\thispagestyle{empty}

\section{Introduction}

It was proved by Gow \cite[Theorem 1]{Gow85} that if $q$ is the power of an odd prime, and $\chi$ is an irreducible complex character of the finite symplectic group $\Sp(2n,\F_q)$ which is real-valued, then the complex representation $\pi$ which affords $\chi$ is a real representation (or defined over $\R$) if and only if $\pi$ acts trivially on the center of $\Sp(2n, \F_q)$.  That is, Gow computes the Frobenius-Schur indicators of the real-valued characters of $\Sp(2n,\F_q)$ when $q$ is odd.  As it turns out, when $q \equiv 1($mod $4)$ all irreducible complex characters of $\Sp(2n, \F_q)$ are real-valued, but this is not the case when $q \equiv 3($mod $4)$.  This author unified these cases by proving \cite[Theorem 1.3]{V05} that for any odd $q$, there is a certain twisted Frobenius-Schur indicator which is $1$ for all irreducible complex characters of $\Sp(2n, \F_q)$. 

The technique used to prove the results just stated is not amenable for the case when $q$ is even, although several partial results have been proved.  It follows from results of Gow \cite{Gow81} and Ellers and Nolte \cite{ElNo82} that when $q$ is even, all complex irreducible characters of $\Sp(2n, \F_q)$ are real-valued.  A result of Prasad \cite[Theorem 3]{Pr98} implies that any complex irreducible representation of $\Sp(2n, \F_q)$ with $q$ even, which appears in the Gelfand-Graev representation, can be defined over the real numbers.  It has also been shown computationally \cite[Theorem 5.2]{TV17} than when $q$ is even and $n \leq 8$, all complex irreducible representations of $\Sp(2n, \F_q)$ are real representations.  In this paper, we finally settle the general case, and we show in Theorem \ref{main2} that when $q$ is even every complex irreducible representation of $\Sp(2n, \F_q)$, for any $n$, is a real representation.  In the process of proving this statement, we also obtain a new combinatorial identity in Theorem \ref{main1}, which states that for any prime power $q$, the sum of the degrees of the unipotent characters of $\Sp(2n,\F_q)$ (or of $\SO(2n+1, \F_q)$) is $\prod_{i=1}^n (q^{2i}-1)$ times the coefficient of $u^n$ in the expansion of
$$ \prod_{i \geq 1} \frac{1 + u/q^{2i}}{1 - u/q^{2i-1}} \prod_{1 \leq i < j \atop{ i + j \text{ odd}}} (1 - u^2/q^{i+j})^{-1}.$$

We now outline our method of proof, as well as the structure of this paper.  Preliminary results and notation are given in Section \ref{Prelim}.  By the Frobenius-Schur involution formula, given in Section \ref{FS}, a finite group $G$ has the property that all of its complex irreducible representations are defined over the real numbers (that is, all Frobenius-Schur indicators are 1) if and only if the sum of the character degrees of $G$ is equal to the number of involutions in $G$.  The strategy is to develop a generating function for the character degree sum of $\Sp(2n, \F_q)$, where $q$ is even, and match this with the generating function for the number of involutions in $\Sp(2n, \F_q)$, which was computed by Fulman, Guralnick, and Stanton \cite{FGS}.  The main obstruction to directly calculating the character degree sum is the lack of combinatorial control of the sum of degrees of unipotent characters.  To work around this obstruction, we apply another result of Gow \cite[Theorem 2]{Gow85} that when $q$ is odd, all complex irreducible characters of $\SO(2n+1, \F_q)$ have Frobenius-Schur indicator $1$.  We then take advantage of the following three facts:
\begin{enumerate}
\item[(i)] When $q$ is odd, the generating function for the character degree sum of $\SO(2n+1, \F_q)$ is given by the generating function for the number of involutions in this group (by Gow's result).
\item[(ii)] The unipotent character degrees of $\Sp(2n, \F_q)$, or of $\SO(2n+1, \F_q)$, are given by expressions in $q$ which are independent of whether $q$ is odd or even.
\item[(iii)] When $q$ is even, $\SO(2n+1, \F_q) \cong \Sp(2n, \F_q)$.
\end{enumerate}
In Section \ref{chardegrees}, we describe all character degrees of $\SO(2n+1, \F_q)$, for any $q$, and in Proposition \ref{GenFunSO} we give a generating function for the character degree sum of this group.  In Section \ref{Main} we obtain our main results by first expanding, in Proposition \ref{GUexpand}, the factors in the generating function from Proposition \ref{GenFunSO} corresponding to unipotent degrees of finite general linear and unitary groups, by adapting methods used in \cite{FV14}.  The remaining factor corresponds to the sum of unipotent character degrees of $\Sp(2n, \F_q)$.  Using fact (i) above, we solve for this factor of the generating function when $q$ is odd, giving us Theorem \ref{main1}.  By fact (ii), this factor is also the generating function for the unipotent character degree sum of $\Sp(2n, \F_q)$ when $q$ is even, and we can use this and fact (iii) to finally match the character degree sum of $\SO(2n+1, \F_q) \cong \Sp(2n, \F_q)$ with the number of involutions when $q$ is even, yielding Theorem \ref{main2}.

Also relevant to our main result in Theorem \ref{main2} is the following conjecture regarding finite simple groups, first stated explicitly in \cite{KaKu}, and discussed in \cite[Section 3]{TV17}:  A finite simple group $G$ has the property that all of its complex irreducible representations are real representations, if and only if every element of $G$ is the product of two involutions from $G$.  Our Theorem \ref{main2} resolves one of the remaining families to check for this conjecture to hold.  We hope to extend the methods below to confirm this conjecture for the final cases in a subsequent paper.\\
\\
\noindent {\bf Acknowledgements. }  The author was supported in part by a grant from the Simons Foundation, Award \#280496.

\section{Preliminaries} \label{Prelim}

\subsection{Frobenius-Schur indicators and involutions} \label{FS}

If $G$ is a finite group, we let $\Irr(G)$ denote the set of irreducible complex characters of $G$.  Given $\chi \in \Irr(G)$, the {\em Frobenius-Schur indicator} of $\chi$, which we denote by $\vep(\chi)$, is defined by the formula
$$ \vep(\chi) = \frac{1}{|G|} \sum_{g \in G} \chi(g^2).$$
Let $(\pi, V)$ be an irreducible complex representation afforded by the character $\chi$.  We say that $(\pi, V)$ is a {\em real representation} or {\em defined over $\R$} if there is a basis for $V$ such that all images of the corresponding matrix representation have all real entries.  By classical results of Frobenius and Schur, $\vep(\chi)$ has the following property:
$$ \vep(\chi) = \left\{ \begin{array}{rl} 1 & \text{if $(\pi, V)$ is defined over $\R$}, \\
-1 & \text{if $\chi = \bar{\chi}$ but $(\pi, V)$ is not defined over $\R$,} \\
0 & \text{if } \chi \neq \bar{\chi}.
\end{array} \right.$$
By applying this fact and the orthogonality relations of characters, one obtains the {\em Frobenius-Schur involution formula}:
$$ \sum_{\chi \in \Irr(G)} \vep(\chi) \chi(1) = \# \{ g \in G \, \mid \, g^2 = 1\}.$$

Define an {\em involution} in $G$ to be an element $g \in G$ such that $g^2 = 1$.  It then follows from the formula above that $\vep(\chi) = 1$ for every $\chi \in \Irr(G)$ if and only if the sum of all character degrees of $G$ is equal to the number of involutions in $G$.  See \cite[Chapter 4]{Isaacs} for all of the above theory regarding the Frobenius-Schur indicator.

We now consider the finite groups of interest.  Throughout this paper, we let $\F_q$ be a finite field with $q$ elements with $q$ a power of a prime $p$, and $\bar{\F}_q$ a fixed algebraic closure of $\F_q$.  First let $G = \mathrm{SO}(2n+1, \F_q)$ be the finite special orthogonal group over $\F_q$, where $q$ is odd.  Gow \cite[Theorem 2]{Gow85} proved that $\vep(\chi) = 1$ for every $\chi \in \Irr(G)$.  Meanwhile, Fulman, Guralnick, and Stanton have computed a generating function for the number of involutions in $G =\mathrm{SO}(2n+1, \F_q)$ (where $G=1$ if $n=0$).  Specifically, it follows from \cite[Theorem 2.17 and Lemma 6.1(3)]{FGS} and the fact that $\mathrm{O}(2n+1, \F_q) \cong \SO(2n+1, \F_q) \times \{ \pm 1 \}$ (see also \cite[Theorem 7.1(2)]{TV17}), that the number of involutions in $\SO(2n+1, \F_q)$ is $\prod_{i=1}^n (q^{2i} - 1)$ times the coefficient of $u^n$ in the expansion of
$$\frac{1}{1-u} \prod_{i \geq 1} \frac{(1 + u/q^{2i})^2}{1-u^2/q^{2i}}.$$
From this result, along with the Frobenius-Schur involution formula and Gow's result that $\vep(\chi) = 1$ for every $\chi \in \Irr(\SO(2n+1, \F_q))$, we have the following consequence.

\begin{lemma} \label{SOqodd}
For $q$ odd, the sum of the character degrees of $\SO(2n+1, \F_q)$ is $\prod_{i=1}^{n} (q^{2i}-1)$ times the coefficient of $u^n$ in the expansion of
$$\frac{1}{1-u} \prod_{i \geq 1} \frac{(1 + u/q^{2i})^2}{1-u^2/q^{2i}}.$$
\end{lemma}

We now consider the case that $q$ is even, and $G = \mathrm{Sp}(2n,\F_q)$, and recall that $G$ is isomorphic to $\SO(2n+1, \F_q)$ in this case (and when $n=0$ we take $\Sp(0, \F_q) = \SO(1, \F_q)$).  Fulman, Guralnick, and Stanton have shown that \cite[Theorems 2.14 and 5.3]{FGS} when $q$ is even, the number of involutions in $\Sp(2n, \F_q)$ is $\prod_{i=1}^{n} (q^{2i} - 1)$ times the coefficient of $u^n$ in the expansion of
$$\frac{1}{1-u}\prod_{i\geq 1}\frac{1 + u/q^{2i}}{ 1-u^2/q^{2i}}.$$
This generating function, together with the Frobenius-Schur involution formula, provides the following criterion for all complex irreducible representations of $\Sp(2n,\F_q)$, with $q$ even, to be defined over the real numbers.

\begin{lemma} \label{Speven}
Let $q$ be even.  Then $\vep(\chi) = 1$ for every $\chi \in \Irr(\Sp(2n,\F_q))$ if and only if the character degree sum of $\Sp(2n, \F_q)$ is $\prod_{i=1}^{n} (q^{2i} - 1)$ times the coefficient of $u^n$ in the expansion of
$$\frac{1}{1-u}\prod_{i \geq 1} \frac{1 + u/q^{2i}}{1-u^2/q^{2i}}.$$
\end{lemma}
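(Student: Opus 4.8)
The plan is simply to assemble two ingredients already laid out above. First, recall the consequence of the Frobenius-Schur involution formula recorded in Section \ref{FS}: since each $\vep(\chi) \in \{-1, 0, 1\}$ while each $\chi(1) > 0$, the identity $\sum_{\chi \in \Irr(G)} \vep(\chi)\chi(1) = \#\{g \in G \mid g^2 = 1\}$ forces $\sum_{\chi \in \Irr(G)} \vep(\chi)\chi(1) \leq \sum_{\chi \in \Irr(G)} \chi(1)$, with equality if and only if $\vep(\chi) = 1$ for every $\chi \in \Irr(G)$. Applying this with $G = \Sp(2n,\F_q)$, all Frobenius-Schur indicators of $\Sp(2n,\F_q)$ equal $1$ precisely when the character degree sum of $\Sp(2n,\F_q)$ equals the number of involutions in $\Sp(2n,\F_q)$.

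Second, I would substitute in the count of involutions quoted immediately before the statement: by \cite[Theorems 2.14 and 5.3]{FGS}, for $q$ even the number of involutions in $\Sp(2n,\F_q)$ equals $\prod_{i=1}^{n} (q^{2i}-1)$ times the coefficient of $u^n$ in the expansion of $\frac{1}{1-u}\prod_{i\geq 1}\frac{1 + u/q^{2i}}{1-u^2/q^{2i}}$. Combining this with the equivalence from the previous paragraph yields the asserted statement verbatim.

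There is no genuine obstacle in this lemma; its role is organizational, converting the representation-theoretic question about $\Sp(2n,\F_q)$ into the combinatorial target identity to be verified in Section \ref{Main}. The only point worth a brief remark is the degenerate case $n = 0$, where $\Sp(0,\F_q)$ is the trivial group and both quantities equal $1$, consistent with the constant term of the generating function and with the conventions fixed above.
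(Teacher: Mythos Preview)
Your proposal is correct and follows exactly the same approach as the paper: the paper's justification (given in the paragraph preceding the lemma rather than in a formal proof) likewise combines the Frobenius--Schur involution criterion from Section~\ref{FS} with the Fulman--Guralnick--Stanton involution count \cite[Theorems 2.14 and 5.3]{FGS}. Your added remark on the degenerate case $n=0$ is consistent with the paper's conventions and harmless.
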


Lemmas \ref{SOqodd} and \ref{Speven} motivate our main strategy in proving that indeed $\vep(\chi)=1$ for all $\chi \in \Irr(\Sp(2n,\F_q))$ when $q$ is even.

\subsection{Self-dual polynomials}  \label{selfdualpolys}
Let $f(t) \in \F_q[t]$ be a monic polynomial with nonzero constant term of degree $d$, say $f(t) = t^d + a_{d-1} t^{d-1} + \cdots + a_1 t + a_0$.  We define the {\em dual} polynomial of $f(t)$, denoted $f^*(t)$, as
$$ f^*(t) = a_0^{-1} t^n f(t^{-1}).$$
We say $f(t)$ is {\em self-dual} if $f(t) = f^*(t)$.  Equivalently, $f(t)$ is self-dual if, whenever $\alpha \in \bar{\F}_q^{\times}$ is a root of $f(t)$ with multiplicity $m$, then $\alpha^{-1}$ is also a root of $f(t)$ with multiplicity $m$.

We let $\mathcal{N}(q)$ be the set of monic irreducible self-dual polynomials with nonzero constant in $\F_q[t]$, and we let $N^*(q; d)$ denote the number of polynomials in $\mathcal{N}(q)$ of degree $d$.  Denote by $\mathcal{M}(q)$ the set of unordered pairs $\{g, g^*\}$ of monic irreducible polynomials with nonzero constant in $\F_q[t]$ such that $g(t) \neq g^*(t)$, and let $M^*(q; d)$ be the number of unordered pairs $\{ g, g^*\}$ in $\mathcal{M}(q)$ such that $g$ has degree $d$.  By \cite[Lemma 1.3.16]{FNP}, we have that if $d$ is odd and $d > 1$ then $N^*(q; d)=0$.  That is, other than $t \pm 1$, all polynomials in $\mathcal{N}(q)$ have even degree.

Define, throughout this paper, the number $e(q) = e$ to be $e=1$ if $q$ is even and $e=2$ if $q$ is odd.  We recall the following identities involving self-dual polynomials which we will need.  These results are in \cite[Lemma 1.3.17(a,d)]{FNP}.

\begin{lemma} \label{dualgenfn}
We have the following formal identities of power series in an indeterminate $w$:
\begin{enumerate}
\item[(a)] $\displaystyle \prod_{d \geq 1} (1 - w^d)^{-N^*(q; 2d)} (1 - w^d)^{-M^*(q; d)} = \frac{(1 - w)^e}{1-qw}.$
\item[(b)] $\displaystyle \prod_{d \geq 1} (1 + w^d)^{-N^*(q; 2d)} (1 - w^d)^{-M^*(q; d)} = 1 - w.$
\end{enumerate}
\end{lemma}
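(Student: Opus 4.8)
\noindent\emph{Proof proposal.} My plan is to prove (a) first, by computing in two different ways the generating function
$$ S(w) = \sum_{\substack{f \in \F_q[t] \text{ monic self-dual}\\ f(0) \neq 0}} w^{\deg f}, $$
and then to deduce (b) from (a) together with the elementary Euler product for monic polynomials with nonzero constant term. Throughout, all the products below are taken over polynomials with nonzero constant term, so that $f \mapsto f^*$ is defined, multiplicative, and compatible with unique factorization.

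The first, direct, computation of $S(w)$ goes as follows. Writing $f(t) = t^d + a_{d-1}t^{d-1} + \cdots + a_1 t + a_0$ with $a_0 \neq 0$, one reads off from the definition that $f$ is self-dual if and only if $a_0^2 = 1$ and $a_j = a_0 a_{d-j}$ for all $j$. When $q$ is even this forces $a_0 = 1$ and $f$ palindromic, with $a_1, \dots, a_{\lfloor d/2\rfloor}$ free, so there are $q^{\lfloor d/2\rfloor}$ such polynomials of degree $d$ and summing the geometric series gives $S(w) = (1+w)/(1-qw^2)$. When $q$ is odd there is in addition the ``skew-palindromic'' family $a_0 = -1$, $a_j = -a_{d-j}$ (disjoint from the palindromic one since $2a_0 \neq 0$), and a short count of the free coefficients according to the parity of $d$ contributes $w(1+w)/(1-qw^2)$, so that $S(w) = (1+w)^2/(1-qw^2)$. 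In both cases $S(w) = (1+w)^e/(1-qw^2)$.

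For the second computation, unique factorization shows that every monic self-dual $f$ with $f(0)\neq 0$ is uniquely a product of powers of self-dual irreducibles and powers of products $gg^*$ with $\{g,g^*\} \in \cM(q)$, whence
$$ S(w) = \prod_{p \in \cN(q)} (1 - w^{\deg p})^{-1} \prod_{\{g,g^*\} \in \cM(q)} (1 - w^{2\deg g})^{-1}. $$
Since $N^*(q;d) = 0$ for odd $d>1$ while the degree-one members of $\cN(q)$ are precisely $t \pm 1$, so $N^*(q;1) = e$ (by \cite[Lemma 1.3.16]{FNP} and the surrounding remark), the first product is $(1-w)^{-e}\prod_{d\geq 1}(1-w^{2d})^{-N^*(q;2d)}$ and the second is $\prod_{d\geq 1}(1-w^{2d})^{-M^*(q;d)}$. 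Equating the two expressions for $S(w)$ and rearranging gives
$$ \prod_{d\geq 1}(1-w^{2d})^{-N^*(q;2d)}(1-w^{2d})^{-M^*(q;d)} = \frac{(1-w)^e(1+w)^e}{1-qw^2} = \frac{(1-w^2)^e}{1-qw^2}, $$
and since only even powers of $w$ occur, the substitution $w^2 \mapsto w$ yields (a).

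For (b) I would start from $\prod_{p}(1-w^{\deg p})^{-1} = (1-w)/(1-qw)$, the product over all monic irreducible $p$ with $p(0)\neq 0$, which follows from the count $q^d - q^{d-1}$ of monic polynomials of degree $d\geq 1$ with nonzero constant term. Splitting this product over $\cN(q)$ and $\cM(q)$ exactly as above gives $\prod_{d\geq 1}(1-w^{2d})^{-N^*(q;2d)}(1-w^d)^{-2M^*(q;d)} = (1-w)^{1+e}/(1-qw)$; factoring $1-w^{2d} = (1-w^d)(1+w^d)$, the left side is the product of $\prod_{d\geq 1}(1-w^d)^{-N^*(q;2d)}(1-w^d)^{-M^*(q;d)}$, which equals $(1-w)^e/(1-qw)$ by (a), and of the left side of (b), so dividing gives the left side of (b) $ = (1-w)^{1+e}/(1-qw)\cdot(1-qw)/(1-w)^e = 1-w$. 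I expect the only real work here to be the first step, the direct evaluation $S(w) = (1+w)^e/(1-qw^2)$: it is elementary but requires keeping careful track of the parity of $\deg f$, the cases $a_0 = \pm 1$, and the degeneracies in characteristic two. Everything after that is formal manipulation of power series.
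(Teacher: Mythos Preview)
Your argument is correct. The direct count of self-dual monic polynomials of each degree, split into the palindromic and (for $q$ odd) skew-palindromic families, gives $S(w) = (1+w)^e/(1-qw^2)$; the Euler-product expression for $S(w)$ via unique factorization, together with the vanishing of $N^*(q;d)$ for odd $d>1$ and $N^*(q;1)=e$, then yields (a) after the substitution $w^2 \mapsto w$, which is legitimate since both sides involve only even powers. Your derivation of (b) from (a) via the Euler product $\prod_p (1-w^{\deg p})^{-1} = (1-w)/(1-qw)$ over monic irreducibles with nonzero constant term, and the factorization $1-w^{2d}=(1-w^d)(1+w^d)$, is also clean and correct.

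As for comparison with the paper: the paper does not give its own proof of this lemma at all, but simply cites \cite[Lemma 1.3.17(a,d)]{FNP}. So you have supplied a complete, self-contained proof where the paper relies on an external reference. Your approach---counting self-dual polynomials both directly from their coefficients and via their irreducible factorization---is the natural one and is essentially how such identities are established in \cite{FNP}; there is no meaningfully different route being taken.

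One small presentational remark: in the skew-palindromic count for even degree $d=2m$ you might note explicitly that the middle coefficient $a_m$ is forced to be $0$, which is why the count drops to $q^{m-1}$ rather than $q^m$; you allude to this but it is the one place a reader might pause.
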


\subsection{Partitions and Schur functions} \label{PartSchur}

Let $\mathcal{P}$ denote the set of all partitions of non-negative integers, and let $\mathcal{P}_n$ denote the set of partitions of $n$, where $\mathcal{P}_0$ consists of only the empty partition.  That is, if $\lambda \in \mathcal{P}_n$ with $n \geq 1$, then $\lambda = (\lambda_1, \lambda_2, \ldots, \lambda_l)$ such that $\lambda_i \geq \lambda_{i+1}$ and $\lambda_i > 0$ for each $i$, and 
$$ |\lambda| = \sum_{i = 1}^{l} \lambda_i = n.$$
Given a partition $\lambda = (\lambda_1, \lambda_2, \ldots, \lambda_{\ell}) \in \mathcal{P}$, we will need the statistic $a(\lambda)$ defined as
$$ a(\lambda) = \sum_{i=1}^l (i-1) \lambda_i.$$
Given $\lambda \in \cP$, we let $\lambda'$ denote the conjugate partition, which is obtained by transposing the Young diagram of $\lambda$.  That is, $\lambda_i'$ is the number of $j$ such that $\lambda_j \geq i$.  If we identify $\lambda$ with its Young diagram, let $y \in \lambda$ denote a position in the Young diagram.  We then let $h(y)$ denote the hook-length of that position in $\lambda$, so if $y$ is in position $(i,j)$ of $\lambda$, then $h(y) = \lambda_i + \lambda'_j-i-j+1$ (see \cite[I.1, Example 1]{Mac95}).  From \cite[I.1, Example 2]{Mac95}, we have the identity
\begin{equation} \label{hooks}
\sum_{y \in \lambda} h(y) = a(\lambda) + a(\lambda') + |\lambda|.
\end{equation}

Given $\lambda \in \mathcal{P}$ and a countable set of variables $\{x_1, x_2, \ldots \}$, we let 
$$s_{\lambda} = s_{\lambda}(x_1, x_2, \ldots)$$ 
denote the Schur function corresponding to $\lambda$ in these variables, see \cite[I.3]{Mac95} for a definition.  In particular, $s_{\lambda}$ is a homogeneous symmetric function of degree $|\lambda|$, so for any $b$ we have
\begin{equation} \label{schurhom}
b^{|\lambda|} s_{\lambda}(x_1, x_2, \ldots) = s_{\lambda}(bx_1, bx_2, \ldots).
\end{equation}
We will need to apply the following identities for Schur functions, which are \cite[I.5, Examples 4 and 6]{Mac95}.

\begin{lemma} \label{schurid}
We have the identities
\begin{enumerate}
\item[(1)] $\displaystyle \sum_{\lambda \in \mathcal{P}} s_{\lambda}(x_1, x_2, \ldots) = \prod_{i \geq 1} (1- x_i)^{-1} \prod_{1 \leq i < j} (1 - x_i x_j)^{-1},$ and
\item[(2)] $\displaystyle \sum_{\lambda \in \mathcal{P}} (-1)^{a(\lambda)} s_{\lambda}(x_1, x_2, \ldots) = \prod_{i \geq 1} (1- x_i)^{-1} \prod_{1 \leq i < j} (1 + x_i x_j)^{-1}$.
\end{enumerate}
\end{lemma}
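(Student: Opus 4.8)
The plan is to prove both identities by the symmetric form of the Robinson--Schensted--Knuth (RSK) correspondence, following the method of \cite[I.3, I.5]{Mac95}. Consider the symmetric matrices $A = (a_{ij})_{i,j \geq 1}$ with entries in $\Z_{\geq 0}$ and only finitely many nonzero entries, weighted by $\prod_{i \geq 1} x_i^{r_i(A)}$, where $r_i(A) = \sum_j a_{ij}$. On one hand, summing this weight over all such $A$ factors, by the symmetry $a_{ij} = a_{ji}$, as the diagonal contribution $\prod_i \sum_{a_{ii} \geq 0} x_i^{a_{ii}} = \prod_i (1 - x_i)^{-1}$ times the off-diagonal contribution $\prod_{i < j} \sum_{a_{ij} \geq 0} (x_i x_j)^{a_{ij}} = \prod_{i < j} (1 - x_i x_j)^{-1}$. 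On the other hand, ordinary RSK restricts to a bijection $A \mapsto T$ between these symmetric matrices and all semistandard Young tableaux (the insertion and recording tableaux of the usual output pair coincide precisely when $A$ is symmetric), under which $r_i(A)$ is the number of entries of $T$ equal to $i$; since $s_\lambda(x_1, x_2, \ldots) = \sum_{\mathrm{sh}(T) = \lambda} \prod_i x_i^{(\text{number of }i\text{'s in }T)}$ by definition, the same sum equals $\sum_{\lambda \in \mathcal{P}} s_\lambda(x_1, x_2, \ldots)$. Equating the two evaluations gives identity (1).

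For identity (2) I would rerun the argument with the signed weight $(-1)^{\sum_{i < j} a_{ij}} \prod_i x_i^{r_i(A)}$. On the matrix side the diagonal factor is unchanged while each off-diagonal factor becomes $\sum_{a_{ij} \geq 0} (-1)^{a_{ij}} (x_i x_j)^{a_{ij}} = (1 + x_i x_j)^{-1}$, producing the right-hand side of (2). On the tableau side one must re-express the sign in terms of $\lambda = \mathrm{sh}(T)$. Writing $|A|$ for the sum of all entries of $A$, the symmetry gives $|A| = \mathrm{tr}(A) + 2 \sum_{i < j} a_{ij}$, so $(-1)^{\sum_{i < j} a_{ij}} = (-1)^{(|A| - \mathrm{tr}(A))/2}$. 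By Knuth's refinement of the symmetric RSK correspondence, $\mathrm{tr}(A)$ equals the number of columns of $\lambda$ of odd length, that is $c(\lambda) := \#\{ j : \lambda'_j \text{ odd}\}$, while clearly $|A| = |\lambda|$. Now $a(\lambda) = \sum_j \binom{\lambda'_j}{2} = \tfrac{1}{2}\bigl(\sum_j (\lambda'_j)^2 - |\lambda|\bigr)$, and since an odd square is $\equiv 1$ and an even square $\equiv 0$ modulo $4$ one has $\sum_j (\lambda'_j)^2 \equiv c(\lambda) \pmod 4$; combining these (together with $c(\lambda) \equiv |\lambda| \pmod 2$) yields $a(\lambda) \equiv \tfrac{1}{2}(|\lambda| - c(\lambda)) \pmod 2$, hence $(-1)^{a(\lambda)} = (-1)^{\sum_{i<j} a_{ij}}$. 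The signed sum over tableaux therefore collapses to $\sum_{\lambda \in \mathcal{P}} (-1)^{a(\lambda)} s_\lambda(x_1, x_2, \ldots)$, which is identity (2).

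The factorizations and weight bookkeeping above are routine, so the step I expect to be the main obstacle is Knuth's identification of $\mathrm{tr}(A)$ with the number of odd columns of the RSK shape of a symmetric $\Z_{\geq 0}$-matrix, together with the parity computation that converts this count into the sign $(-1)^{a(\lambda)}$. In any case, both identities — and this refinement — belong to the circle of results recorded as \cite[I.5, Examples 4 and 6]{Mac95}, so for the present paper it suffices to cite them; the outline above indicates how one reconstructs the proofs. (One could alternatively expand both sides in the power-sum basis and match coefficients, but that route is less transparent.)
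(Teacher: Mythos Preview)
Your argument is correct. The paper itself does not prove this lemma at all: it simply records the two identities with the citation ``\cite[I.5, Examples 4 and 6]{Mac95}'' and moves on. What you have supplied is a genuine proof sketch, and it is in fact the standard one lying behind those examples in Macdonald --- symmetric RSK for identity~(1), together with Knuth's refinement that, for a symmetric $\Z_{\geq 0}$-matrix $A$ mapping to a tableau of shape $\lambda$, the trace $\mathrm{tr}(A)$ equals the number of odd-length columns of $\lambda$, for identity~(2).

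Your parity computation is also right: from $a(\lambda)=\sum_j\binom{\lambda'_j}{2}=\tfrac12\bigl(\sum_j(\lambda'_j)^2-|\lambda|\bigr)$ and $\sum_j(\lambda'_j)^2\equiv c(\lambda)\pmod 4$ one gets $2a(\lambda)\equiv c(\lambda)-|\lambda|\pmod 4$, hence $a(\lambda)\equiv\tfrac12(|\lambda|-c(\lambda))\pmod 2$, which matches $\sum_{i<j}a_{ij}=\tfrac12(|A|-\mathrm{tr}(A))$. So the signed weight on the matrix side does collapse to $(-1)^{a(\lambda)}$ on the tableau side, as you claim. Your own caveat is apt: the only non-routine input is Knuth's theorem on $\mathrm{tr}(A)$, and for the purposes of the paper a bare citation to Macdonald is all that is needed --- which is exactly what the paper does.
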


\subsection{Character degrees of finite reductive groups} \label{GenCharDeg}

Let $\bG$ be a connected reductive group over $\bar{\F}_q$, defined over $\F_q$ through a Frobenius automorphism $F$.  In this section, we recall facts about the irreducible complex characters of the finite reductive group $G = \bG^F$.

If $\bT$ is any maximal $F$-stable torus of $\bG$, write $T = \bT^F$, and let $\theta$ be any linear complex character of $T$.  Deligne and Lusztig \cite{DeLu} defined a virtual character $R_T^G(\theta)$ of $G$ corresponding to the pair $(T, \theta)$.  A {\it unipotent} character of $G$ is any irreducible character of $G$ which appears as a constituent of $R_T^G({\bf 1})$ for some maximal torus $T$, where ${\bf 1}$ is the trivial character.  We need the following basic property of unipotent characters.

\begin{lemma} \label{uniprod} Let $\bG, \bG_1,$ and $\bG_2$ be connected reductive groups over $\bar{\F}_q$ such that $\bG \cong \bG_1 \times \bG_2$.  Suppose $\bG$ is defined over $\F_q$ by Frobenius $F$, and $\bG_i$ is $F$-stable for $i=1, 2$, so that $\bG^F \cong \bG_1^{F} \times \bG_2^{F}$.  Then the collection of unipotent characters of $G = \bG^F$ is the collection of characters of the form $\psi_1 \otimes \psi_2$ where $\psi_i$ is a unipotent character of $G_i = \bG_i^{F}$ for $i = 1, 2$.
\end{lemma}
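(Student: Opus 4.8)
The plan is to reduce the statement to the multiplicativity of Deligne--Lusztig induction with respect to direct products of reductive groups. First I would recall that for a direct product $\bG = \bG_1 \times \bG_2$ with compatible $F$-structures, every maximal $F$-stable torus $\bT$ of $\bG$ decomposes as $\bT = \bT_1 \times \bT_2$ with $\bT_i$ a maximal $F$-stable torus of $\bG_i$, and conversely any such product is a maximal $F$-stable torus of $\bG$. Correspondingly $T = \bT^F = T_1 \times T_2$, and every linear character of $T$ is of the form $\theta_1 \otimes \theta_2$; taking $\theta = \mathbf{1}$ forces $\theta_i = \mathbf{1}$. The key input is the product formula for Deligne--Lusztig virtual characters, namely $R_{\bT}^{\bG}(\mathbf{1}) = R_{\bT_1}^{\bG_1}(\mathbf{1}) \otimes R_{\bT_2}^{\bG_2}(\mathbf{1})$, which holds because the virtual character $R_{\bT}^{\bG}(\theta)$ is defined via the alternating sum of $\ell$-adic cohomology of the Deligne--Lusztig variety, and this variety for a product is the product of the varieties, so the cohomology factors as a tensor product by the K\"unneth formula.

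Given this, I would argue as follows. Suppose $\psi_i$ is a unipotent character of $G_i$, so $\psi_i$ is a constituent of $R_{\bT_i}^{\bG_i}(\mathbf{1})$ for some maximal $F$-stable torus $\bT_i$ of $\bG_i$. Then $\psi_1 \otimes \psi_2$ is a constituent of $R_{\bT_1}^{\bG_1}(\mathbf{1}) \otimes R_{\bT_2}^{\bG_2}(\mathbf{1}) = R_{\bT}^{\bG}(\mathbf{1})$ with $\bT = \bT_1 \times \bT_2$, and since $\psi_1 \otimes \psi_2$ is irreducible (irreducible characters of a direct product are exactly the tensor products of irreducibles of the factors), it is a unipotent character of $G$. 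Conversely, if $\psi$ is a unipotent character of $G$, then $\psi$ is a constituent of $R_{\bT}^{\bG}(\mathbf{1})$ for some maximal $F$-stable torus $\bT = \bT_1 \times \bT_2$; writing $\psi = \psi_1 \otimes \psi_2$ by irreducibility over the direct product, the fact that $\psi_1 \otimes \psi_2$ occurs in $R_{\bT_1}^{\bG_1}(\mathbf{1}) \otimes R_{\bT_2}^{\bG_2}(\mathbf{1})$ forces $\psi_i$ to occur in $R_{\bT_i}^{\bG_i}(\mathbf{1})$ (for instance, by taking inner products: $\langle \psi_1 \otimes \psi_2, R_{\bT_1}^{\bG_1}(\mathbf{1}) \otimes R_{\bT_2}^{\bG_2}(\mathbf{1})\rangle = \langle \psi_1, R_{\bT_1}^{\bG_1}(\mathbf{1})\rangle \langle \psi_2, R_{\bT_2}^{\bG_2}(\mathbf{1})\rangle$, and a nonzero multiplicity of the virtual character as a whole forces each factor's multiplicity to be nonzero, after accounting for signs by passing to an honest constituent). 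Hence each $\psi_i$ is unipotent, completing both directions.

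The main obstacle is making the product formula $R_{\bT_1 \times \bT_2}^{\bG_1 \times \bG_2}(\mathbf{1}) = R_{\bT_1}^{\bG_1}(\mathbf{1}) \otimes R_{\bT_2}^{\bG_2}(\mathbf{1})$ fully rigorous and citing it correctly, together with the bookkeeping of signs when one passes between ``is a constituent of the virtual character'' and ``appears with positive multiplicity in some genuine character.'' Concretely, one has to be slightly careful that an irreducible $\psi_1 \otimes \psi_2$ appearing in the virtual tensor product with nonzero coefficient does translate into each $\psi_i$ appearing in $R_{\bT_i}^{\bG_i}(\mathbf{1})$ with nonzero coefficient; this is immediate from the multiplicativity of inner products over a direct product, since $\langle \psi_i, R_{\bT_i}^{\bG_i}(\mathbf{1})\rangle \neq 0$ follows from the product being nonzero. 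I expect that in the write-up this will be handled by simply quoting the standard references for the product formula (e.g. Digne--Michel or the relevant section of Deligne--Lusztig) and the structure of maximal tori in direct products, so the proof is short; the only genuine content is the cohomological K\"unneth argument underlying the product formula, which I would not reprove.
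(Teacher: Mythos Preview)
Your proposal is correct and follows essentially the same approach as the paper: decompose maximal $F$-stable tori of $\bG$ as $\bT_1\times\bT_2$, invoke the product formula $R_{\bT_1\times\bT_2}^{\bG_1\times\bG_2}(\mathbf{1})\cong R_{\bT_1}^{\bG_1}(\mathbf{1})\otimes R_{\bT_2}^{\bG_2}(\mathbf{1})$ (the paper cites \cite[Lemma~2.6(ii)]{GuLaTi} rather than arguing via K\"unneth), and conclude. Your write-up is in fact more explicit than the paper's, which simply says ``the result follows'' after the product formula; your inner-product argument for the converse direction is exactly what is being left implicit there.
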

\begin{proof} First, every maximal torus $\bT$ of $\bG$ is of the form $\bT \cong \bT_1 \times \bT_2$, where $\bT_i$ is a maximal torus of $\bG_i$ for $i=1, 2$.  So if $T = \bT^F$ is a maximal $F$-stable torus of $\bG$, we can write $T = T_1 \times T_2$, where $T_i = \bT_i^{F}$ with $\bT_i$ a maximal $F$-stable torus of $\bG_i$ for $i=1, 2$.  Then we have $R_T^G({\bf 1}) = R_{T_1 \times T_2}^{G_1 \times G_2}({\bf 1} \otimes {\bf 1}) \cong R_{T_1}^{G_1} ({\bf 1}) \otimes R_{T_2}^{G_2}({\bf 1})$, by \cite[Lemma 2.6(ii)]{GuLaTi} for example.  The result follows.
\end{proof}

Now let $\bG^*$ be the group dual to $\bG$, with dual Frobenius map $F^*$, and write $G^* = \bG^{*F^*}$ (see \cite[Secs. 4.2 and 4.3]{Ca85}).  We now assume that $\bG$ has connected center.  If $s \in G^*$ is any semisimple element of $G^*$, then the centralizer $C_{\bG^*}(s)$ is a connected reductive group since we are assuming the center $Z(\bG)$ is connected \cite[Theorem 4.5.9]{Ca85}.  So we may consider unipotent characters of the group $C_{\bG^*}(s)^{F^*}$.

With the assumption that $Z(\bG)$ is connected, we have a parametrization of $\Irr(G)$ through the {\it Jordan decomposition} of characters, originally developed by Lusztig \cite{Lu84}.  Given any $\chi \in \Irr(G)$, $\chi$ bijectively corresponds to a $G^*$-conjugacy class of pairs $(s, \psi)$, where $s \in G^*$ is a semisimple element, and $\psi$ is a unipotent character of $C_{\bG^*}(s)^{F^*}$.  We refer to \cite[Chapter 15]{CaEn04} for a thorough treatment of this bijection, which has many useful properties.  The main information we need from this parametrization of characters for $G$ is the description of character degrees, which follows from the main result of Lusztig on the Jordan decomposition of characters \cite[Theorem 4.23]{Lu84} (for relevant discussion see \cite[Section 12.9]{Ca85} and \cite[Remark 13.24]{DiMi}).  Suppose that $\chi \in \Irr(G)$ corresponds to the $G^*$-class of pairs $(s, \psi)$, and write $\chi = \chi_{(s, \psi)}$.  The degree of $\chi_{(s, \psi)}$ is then given by 
\begin{equation} \label{CharDeg}
 \chi_{(s, \psi)}(1) = [G^* : C_{G^*}(s)]_{p'} \psi(1),
\end{equation}
where $p = \mathrm{char}(\F_q)$, and the subscript $p'$ denotes the prime-to-$p$ part.  We remark that, at least when $\bG$ is a simple algebraic group, for any semisimple $s \in G^*$ we have $C_{G^*}(s) = C_{\bG^*}(s)^{F^*}$ (see \cite[pg. 491]{Ca77}).

\section{Character degrees of $\SO(2n+1, \F_q)$} \label{chardegrees}

\subsection{Semisimple classes and centralizers}  \label{SOclasses} We now apply the ideas of Section \ref{GenCharDeg} to the specific case $G = \bG^F = \SO(2n+1, \F_q)$, and note that $\bG = \SO(2n+1, \bar{\F}_q)$ has connected center (since the center is trivial).  In this case, $G^* = \bG^{*F^*} = \Sp(2n, \F_q)$ (see \cite[pg. 120]{Ca85}).  We describe the semisimple classes of $G^* = \Sp(2n,\F_q)$, and unipotent characters of their centralizers.  

To describe the semisimple classes of $\Sp(2n,\F_q)$ and their centralizers, we apply the results of Wall \cite{Wall}.  Specifically, when $q$ is odd this description may be concluded from \cite[Sec. 2.6 Case (B)]{Wall}, and when $q$ is even this is from \cite[Sec. 3.7]{Wall}.  For details and more general results, see \cite[Sec. 2]{Ng10}.

Recall the notation from Section \ref{selfdualpolys} that $\cN(q)$ denotes the set of monic irreducible self-dual polynomials with nonzero constant in $\F_q[t]$, and $\cM(q)$ is the set of unordered pairs $\{g(t), g^*(t)\}$ such that $g \neq g^*$, where $g(t)$ is monic irreducible with nonzero constant in $\F_q[t]$.  Let us further denote $\cN'(q)=\cN(q) \setminus \{ t+1, t-1 \}$, and so if $f(t) \in \cN'(q)$ then $\dgr(f)$ is even.  A semisimple class $(s)$ in $\Sp(2n,\F_q)$ depends only on its elementary divisors, which must each be of the form $f(t)^{m_f}$ for $f(t) \in \cN'(q)$, or $g(t)^{n_g}$ and $(g(t)^*)^{n_g}$ for $\{g(t), g^*(t)\} \in \cM(q)$, or $(t-1)^{2m_+}$ or $(t+1)^{2m_-}$ (and when $q$ is even we only have $(t-1)^{2m_+}$).  In other words, a semisimple class $(s)$ of $\Sp(2n,\F_q)$ is parametrized by a function
\begin{equation} \label{Phidef}
 \Phi: \cN(q) \cup \cM(q) \rightarrow \Z_{\geq 0}, 
\end{equation}
such that, if we write $\Phi(f)=m_f$ for $f \in \cN(q)'$, $\Phi(\{g, g^*\})=m_g$ for $\{g, g^*\} \in \cM(q)$, and $\Phi(t \pm 1)=m_{\pm}$, we have
\begin{equation} \label{classes}
|\Phi|:= \sum_{f \in \cN'(q)} m_f \dgr(f)/2 + \sum_{ \{g, g^*\} \in \cM(q) } m_g \dgr(g) + m_+ + m_- = n,
\end{equation}
where we leave off $m_-$ in the case that $q$ is even.  Given a semisimple class $(s)$ of $G^*=\Sp(2n,\F_q)$ parametrized by $\Phi$ as above, the centralizer $C_{G^*}(s)$ has structure given by
\begin{align}
C_{G^*}(s) \cong \prod_{f \in \cN'(q)} \GU(m_f, &\F_{q^{\dgr(f)/2}}) \times \prod_{ \{ g, g^*\} \in \cM(q)} \GL(m_g, \F_{q^{\dgr(g)}}) \label{cents}\\
& \times \Sp(2m_+, \F_q) \times \Sp(2m_-, \F_q), \nonumber
\end{align}
where the last factor is left off in the case that $q$ is even, and $\GU(n, \F_q)$ denotes the full unitary group defined over $\F_q$.  In reference to the character degree formula given in \eqref{CharDeg}, if $p = \mathrm{char}(\F_q)$, then using the orders of the finite unitary, general linear, and symplectic groups from \eqref{cents} we have
\begin{equation} \label{firstfactor}
[G^*: C_{G^*}(s)]_{p'} = \frac{\prod_{i = 1}^{n} (q^{2i}- 1)}{P(s)},
\end{equation}
where $P(s)$ is given by
$$\prod_{f \in \cN'(q)} \prod_{i=1}^{m_f} (q^{i \dgr(f)/2} - (-1)^i) \prod_{\{g, g^*\} \in \cM(q)} \prod_{i=1}^{m_g} (q^{i \dgr(g)} - 1) \prod_{i=1}^{m_+} (q^{2i} - 1) \prod_{i=1}^{m_-} (q^{2i} - 1),$$
and the last product is left off when $q$ is even.

\subsection{Unipotent characters} \label{Unidegs} We now must describe the unipotent characters of $C_{G^*}(s)$, which by Lemma \ref{uniprod} are products of unipotent characters of the factors of $C_{G^*}(s)$ given in \eqref{cents}.

Unipotent characters of $\GL(n, \F_q)$ and $\GU(n, \F_q)$ are each parametrized by $\cP_n$, the partitions of $n$.  We use the notation of Section \ref{PartSchur} in the following.  Given $\lambda \in \cP_n$, the unipotent character $\psi_{\GL,\lambda}$ of $\GL(n, \F_q)$ parametrized by $\lambda$ has degree which can be written as
$$ \psi_{\GL, \lambda}(1) = \prod_{i=1}^n (q^i - 1) \cdot \frac{ q^{a(\lambda')}}{\prod_{y \in \lambda} (q^{h(y)} - 1)},$$
a form which can be found in \cite[Equation (21)]{Ol86}, and can also be obtained from \cite[Chapter 4, Equation (6.7)]{Mac95}.  From \cite[Remark 9.5]{Lu77}, the unipotent character $\psi_{\GU, \lambda}$ of $\GU(n, \F_q)$ parametrized by $\lambda$ has degree
$$ \psi_{\GU, \lambda}(1) = \prod_{i=1}^n (q^i - (-1)^i) \cdot \frac{q^{a(\lambda')}}{\prod_{y \in \lambda} (q^{h(y)} - (-1)^{h(y)})}.$$
Using \eqref{hooks}, we have
\begin{align*}
\frac{q^{a(\lambda')}}{\prod_{y \in \lambda} (q^{h(y)} - (\pm 1)^{h(y)})} & = \frac{q^{a(\lambda')}}{q^{\sum_{y \in \lambda} h(y)} \prod_{y \in \lambda} (1 - (\pm 1/q)^{h(y)})} \\
& = \frac{1}{q^{|\lambda|} q^{a(\lambda)} \prod_{y \in \lambda} (1 - (\pm 1/q)^{h(y)})} \\
& = \frac{1}{q^n} (\pm 1)^{a(\lambda)} \frac{ (\pm 1/q)^{a(\lambda)}} {\prod_{y \in \lambda} (1 - (\pm 1/q)^{h(y)})}.
\end{align*}
Similar to the calculation in \cite[pg. 286]{Mac95}, we now apply \cite[I.3, Example 2]{Mac95} to write
\begin{equation} \label{GUdeg}
\psi_{\GU, \lambda}(1) = \prod_{i=1}^n (q^i - (-1)^i) \cdot \frac{1}{q^n} (-1)^{a(\lambda)} s_{\lambda} \left(1, -1/q, (-1/q)^2, \ldots \right),
\end{equation} 
and
\begin{equation} \label{GLdeg}
\psi_{\GL, \lambda}(1) = \prod_{i=1}^n (q^i - 1) \cdot \frac{1}{q^n} s_{\lambda}\left(1, 1/q, 1/q^2, \ldots\right).
\end{equation}

In order to describe the unipotent characters of $\Sp(2n, \F_q)$, we need to define {\it symbols}, originally introduced in \cite{Lu77}.  Following \cite[Sec. 13.8]{Ca85}, a symbol is an ordered pair of finite sets of non-negative integers, say $\Lambda = (\mu, \nu)$, where we write $\mu = (\mu_1 < \mu_2 < \ldots < \mu_r)$, $\nu = (\nu_1 < \nu_2 < \ldots < \nu_k)$, such that $\mu_1$ and $\nu_1$ cannot both be $0$, and such that $r-k>0$.  The number $r-k$ is called the {\it defect} of the symbol $\Lambda$.  The {\it rank} of a symbol $\Lambda$, which we denote by $|\Lambda|$, is defined as
$$ |\Lambda| = \sum_{i = 1}^{r} \mu_i + \sum_{i=1}^k \nu_i + \left\lfloor \left( \frac{r+k-1}{2} \right)^2 \right\rfloor.$$
Note that the symbol $(0, \emptyset)$ is the only symbol of rank $0$, and has defect $1$.
We let $\cS$ denote the collection of all symbols of odd defect, and we let $\cS_n$ denote the set of all symbols of odd defect with rank $n$.  It was proved by Lusztig \cite{Lu77} (see also \cite{As83, Lu81}) that the set $\cS_n$ parametrizes the unipotent characters of $\Sp(2n, \F_q)$ as well as those of $\SO(2n+1, \F_q)$ (where we define $\Sp(0, \F_q)=\SO(1, \F_q) = 1$ for any $q$).  If $\psi_{\Lambda}$ is the unipotent character of $\Sp(2n, \F_q)$ (or of $\SO(2n+1, \F_q)$) corresponding to $\Lambda \in \cS_n$, then the degree of $\psi_{\Lambda}$ is given by
\begin{equation} \label{Spdeg}
\psi_{\Lambda}(1) = \prod_{i = 1}^{n} (q^{2i}-1) \cdot \delta(\Lambda),
\end{equation}
where $\delta(\Lambda)$ is given by (following \cite[Sec. 13.8]{Ca85})
$$ \delta(\Lambda) = \frac{ \prod_{i < j} (q^{\mu_i} - q^{\mu_j}) \prod_{i<j} (q^{\nu_i} - q^{\nu_j}) \prod_{i, j} (q^{\mu_i} + q^{\nu_j})}{2^{(r+k-1)/2} q^{c(\Lambda)} \prod_{i = 1}^{r} \prod_{j = 1}^{\lambda_i} (q^{2j} - 1) \prod_{i = 1}^{k} \prod_{j=1}^{\mu_i} (q^{2j} - 1)},$$
with $c(\Lambda) = \sum_{i=1}^{(r+k-1)/2} \binom{r+k-2i}{2}$.  There is also a formula for $\delta(\Lambda)$ in terms of a generalization of hook-lengths for symbols, see \cite{Ma95, Ol86}.  However, we do not need any of these explicit expressions for $\delta(\Lambda)$, and we only mention them here for the sake of context.  The most important fact we need regarding the expression for $\psi_{\Lambda}(1)$ in \eqref{Spdeg} is that it does not depend on whether $q$ is even or odd, and is the same expression in $q$ regardless of the value of $p = \mathrm{char}(\F_q)$ (see \cite[8.11, Remark (1)]{Lu77}, for example).

\subsection{A generating function}  \label{GenFunSec} With $G = \SO(2n+1, \F_q)$, we now consider an arbitrary $\chi \in \Irr(G)$, so $\chi = \chi_{(s, \psi)}$ corresponds to some $G^*$-class of pairs $(s, \psi)$ in the Jordan decomposition.  Then $s \in G^*= \Sp(2n, \F_q)$ corresponds to some $\Phi$ satisfying \eqref{Phidef} and \eqref{classes}, where $C_{G^*}(s)$ has structure given by \eqref{cents}, and there is no $m_-$ term when $q$ is even.  As in Section \ref{Unidegs}, by Lemma \ref{uniprod} the unipotent character $\psi$ then has structure given by
\begin{equation} \label{psistruc}
\psi = \bigotimes_{f \in \cN'(q)}  \psi_{\GU, \lambda_{f}} \otimes \bigotimes_{\{ g, g^*\} \in \cM(q) } \psi_{\GL, \lambda_{g}} \otimes \psi_{\Lambda_+} \otimes \psi_{\Lambda_-},
\end{equation}
where $\psi_{\GU, \lambda_f}$ is a unipotent character of $\GU(m_f, \F_{q^{\dgr(f)/2}})$ with $|\lambda_f| = m_f$, $\psi_{\GL, \lambda_g}$ is a unipotent of $\GL(m_g, \F_{q^{\dgr(g)}})$ with $|\lambda_g|=m_g$, and $\psi_{\Lambda_{\pm}}$ is a unipotent of $\Sp(2m_{\pm}, \F_q)$ with $|\Lambda_{\pm}| = m_{\pm}$ but there is no $m_-$ or $\Lambda_-$ when $q$ is even.

Because of the expressions in \eqref{GUdeg} and \eqref{GLdeg},  we define
$$ \delta_{\GU}(\lambda, d) = \frac{1}{q^{d|\lambda|/2}} (-1)^{a(\lambda)} s_{\lambda} (1, -1/q^{d/2}, (-1/q^{d/2})^2, (-1/q^{d/2})^3, \ldots),$$
$$\text{and } \quad \delta_{\GL}(\lambda, d) = \frac{1}{q^{d |\lambda|}} s_{\lambda}(1, 1/q^d, 1/q^{2d}, 1/q^{3d}, \ldots).$$
Then the degree of $\psi$ in \eqref{psistruc} is given by
$$\psi(1) = P(s) \prod_{f \in \cN'(q)} \delta_{\GU}(\lambda_f, \dgr(f))  \prod_{\{ g, g^*\} \in \cM(q)} \delta_{\GL}(\lambda_g, \dgr(g)) \cdot \delta(\Lambda_+) \delta(\Lambda_-),$$
where $P(s)$ is exactly as in \eqref{firstfactor}, and there are no $m_-$ or $\Lambda_-$ factors when $q$ is even.  Finally, by \eqref{CharDeg} and \eqref{firstfactor}, the degree of $\chi_{(s, \psi)}$ is given by $\prod_{i = 1}^{n} (q^{2i} - 1)$ times
\begin{equation} \label{PreGenFun}
\prod_{f \in \cN'(q)} \delta_{\GU}(\lambda_f, \dgr(f))  \prod_{\{ g, g^*\} \in \cM(q)} \delta_{\GL}(\lambda_g, \dgr(g)) \cdot \delta(\Lambda_+) \delta(\Lambda_-),
\end{equation}
again with no $\delta(\Lambda_-)$ factor when $q$ is even.

We next give a generating function for the sum of the degrees of all irreducible characters of $\SO(2n+1, \F_q)$.  For this purpose, we define the power series $W(u)$ as follows, which is a generating function for the degrees of the unipotent characters of $\Sp(2n, \F_q)$ for any $n \geq 0$:
$$ W(u) = \sum_{\Lambda \in \cS} \delta(\Lambda) u^{|\Lambda|}.$$
The following result gives us the fundamental tool for our main arguments in the next section.

\begin{proposition} \label{GenFunSO}
The character degree sum of $\SO(2n+1, \F_q)$ is $\prod_{i=1}^n (q^{2i} - 1)$ times the coefficient of $u^n$ in the expansion of 
$$\prod_{d \geq 1} \left( \sum_{\lambda \in \cP} \delta_{\GU}(\lambda, 2d) u^{|\lambda| d} \right)^{N^*(q; 2d)} \prod_{d \geq 1} \left (\sum_{\lambda \in \cP} \delta_{\GL}(\lambda, d) u^{|\lambda| d} \right)^{M^*(q; d)} W(u)^e,$$
where $e = 2$ if $q$ is odd and $e=1$ if $q$ is even.
\end{proposition}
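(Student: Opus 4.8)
The plan is to sum the character degree formula \eqref{PreGenFun} over all $\chi \in \Irr(\SO(2n+1,\F_q))$ by organizing the sum according to the combinatorial data parametrizing each $\chi$. By the Jordan decomposition recalled in Section \ref{GenCharDeg}, summing over $\Irr(G)$ amounts to summing over $G^*$-classes of pairs $(s,\psi)$; by \eqref{psistruc} this is the same as summing, first over all semisimple classes $(s)$ — i.e.\ over all functions $\Phi$ as in \eqref{Phidef} satisfying the rank condition \eqref{classes} — and then, for each such $\Phi$, over all independent choices of the partitions $\lambda_f$ (with $|\lambda_f| = m_f$) for $f \in \cN'(q)$, the partitions $\lambda_g$ (with $|\lambda_g| = m_g$) for $\{g,g^*\} \in \cM(q)$, and the symbols $\Lambda_\pm$ of odd defect (with $|\Lambda_\pm| = m_\pm$). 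Since the summand \eqref{PreGenFun} factors as a product over these independent indices, the whole sum factors as well.

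First I would pass to a generating function in $u$, recording the rank $n$ via $u^n$, so that the constraint \eqref{classes} is automatically handled by multiplication of power series. Concretely, $\sum_{n\geq 0}(\text{degree sum of }\SO(2n+1,\F_q))\,u^n / \prod_{i=1}^n(q^{2i}-1)$ equals a product, over all "slots" in $\cN'(q)\cup\cM(q)\cup\{t-1,t+1\}$, of the corresponding local factors. For a polynomial $f \in \cN'(q)$ of degree $2d$ the local factor is $\sum_{\lambda\in\cP}\delta_{\GU}(\lambda,2d)\,u^{|\lambda|d}$ — here the exponent of $u$ is $m_f\dgr(f)/2 = |\lambda_f|\,d$, matching the first summand of \eqref{classes}, and $\delta_{\GU}(\lambda,2d)$ is exactly the factor of \eqref{PreGenFun} attached to $f$ because $\GU(m_f,\F_{q^{\dgr(f)/2}})$ is a unitary group over a field of size $q^{d}$, so $q^{d/2}$ in the definition of $\delta_{\GU}(\cdot,2d)$ becomes $q^{d}$. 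Similarly, for a pair $\{g,g^*\}\in\cM(q)$ with $\dgr(g)=d$ the local factor is $\sum_{\lambda\in\cP}\delta_{\GL}(\lambda,d)\,u^{|\lambda|d}$, with $u$-exponent $m_g\dgr(g)=|\lambda_g|\,d$ matching the second summand of \eqref{classes}. For the two slots $t\pm 1$ the local factor is $\sum_{\Lambda\in\cS}\delta(\Lambda)u^{|\Lambda|} = W(u)$ each, contributing $W(u)^2$ when $q$ is odd and $W(u)^1$ when $q$ is even (since the $t+1$ slot is absent in characteristic two) — this is precisely the exponent $e$. Collecting like factors by degree, the polynomials of degree $2d$ in $\cN'(q)$ contribute $\big(\sum_\lambda \delta_{\GU}(\lambda,2d)u^{|\lambda|d}\big)^{N^*(q;2d)}$ and the pairs of degree $d$ in $\cM(q)$ contribute $\big(\sum_\lambda \delta_{\GL}(\lambda,d)u^{|\lambda|d}\big)^{M^*(q;d)}$, which is exactly the asserted product. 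Note $\cN'(q)$ contains only even-degree polynomials (by the remark after Lemma \ref{dualgenfn}'s setup), so writing the $\cN'(q)$ exponents as $N^*(q;2d)$ loses nothing.

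The one genuine subtlety — and the step I expect to be the main obstacle to state cleanly rather than hard to prove — is the bookkeeping needed to turn "sum over $(s,\psi)$" into the claimed infinite product. One must check that the map $\chi \mapsto (\Phi, (\lambda_f), (\lambda_g), \Lambda_+, \Lambda_-)$ is a bijection onto the set of all such tuples satisfying only the rank identity \eqref{classes} (with $\Lambda_-$ omitted when $q$ is even), with no further constraints linking the choices in different slots; this is exactly what \eqref{psistruc} together with the parametrization of unipotent characters of $\GL$, $\GU$ (by partitions) and $\Sp$ (by symbols of odd defect) gives. Then one invokes the standard principle that a sum over all finitely-supported functions $\Phi$ from an index set $I$ to $\Z_{\geq 0}$, of a product $\prod_{i\in I} a_i(\Phi(i))$ of local weights, equals $\prod_{i\in I}\big(\sum_{m\geq 0} a_i(m)\big)$ as a formal power series — applied here with the weight of slot $i$ at value $m$ being $\sum u^{(\cdots)}(\text{local degree factor})$ summed over the partitions or symbols of size $m$. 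Finiteness of support of $\Phi$ is automatic since $|\Phi|=n<\infty$, and for each fixed $n$ only finitely many slots can be nonzero, so the product makes sense coefficientwise in $u$. Assembling these observations yields the stated formula for the character degree sum of $\SO(2n+1,\F_q)$ for every $q$.
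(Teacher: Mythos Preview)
Your proposal is correct and follows essentially the same approach as the paper's proof: both organize the sum over $\Irr(G)$ via the Jordan decomposition data $(\Phi,(\lambda_f),(\lambda_g),\Lambda_\pm)$, pass to a generating function in $u$ so that the rank constraint \eqref{classes} is encoded multiplicatively, factor the result as a product of local contributions, and then collect the $\cN'(q)$ and $\cM(q)$ factors by degree (using that $\cN'(q)$ contains only even-degree polynomials) to arrive at the stated expression with the $W(u)^e$ factor. Your treatment is somewhat more explicit about the bijection and the infinite-product principle, but the argument is the same.
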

\begin{proof}  To obtain such a generating function, we need the coefficient of $u^n$ to be the sum of the expressions in \eqref{PreGenFun}, over all choices for $(s, \psi)$ which parametrize irreducible characters of $\SO(2n+1, \F_q)$.  Since the choice in $(s)$ amounts to the choice of $\Phi$ satisfying \eqref{Phidef} and \eqref{classes}, and the choice of $\psi$ in \eqref{psistruc} amounts to the choices of $\lambda_f, \lambda_g \in \cP$ and $\Lambda_{\pm} \in \cS$, the generating function we want is given by
$$ \prod_{f \in \cN'(q)} \left( \sum_{\lambda_f \in \cP} \delta_{\GU} (\lambda_f, \dgr(f)) u^{|\lambda_f| \dgr(f)/2} \right)$$
$$ \cdot \prod_{ \{ g, g^*\} \in \cM(q)} \left( \sum_{\lambda_g \in \cP} \delta_{\GL} (\lambda_g, \dgr(g)) u^{|\lambda_g| \dgr(g)} \right) $$
$$ \cdot \left( \sum_{\Lambda_+ \in \cS} \delta(\Lambda_+) u^{|\Lambda_+|} \right)  \left( \sum_{\Lambda_- \in \cS} \delta(\Lambda_-) u^{|\Lambda_-|} \right),$$
where the last factor with the sum over $\Lambda_-$ is not included when $q$ is even.  First note that these last two factors are both just $W(u)$, so these factors (or single factor in the $q$ even case) can be replaced by $W(u)^e$.  In the first two products, once $f \in \cN'(q)$ or $\{ g, g^* \} \in \cM(q)$ is fixed, the expression in the summations over $\lambda_f$ or $\lambda_g$ depend only on the degrees of $f$ or $g$.  From these observations, we can rewrite the generating function as
$$\prod_{d \geq 2} \left( \sum_{\lambda \in \cP} \delta_{\GU}(\lambda, d) u^{|\lambda| d/2} \right)^{N^*(q; d)} \prod_{d \geq 1} \left (\sum_{\lambda \in \cP} \delta_{\GL}(\lambda, d) u^{|\lambda| d} \right)^{M^*(q; d)} W(u)^e,$$
where the first product is over $d \geq 2$ since $\cN'(q)$ contains no polynomials of degree 1 by definition.  Further, as stated in Section \ref{selfdualpolys}, we know $\cN'(q)$ contains no polynomials of odd degree.  We can therefore replace $d$ with $2d$ in the first product above and then take the product over all $d \geq 1$, giving the generating function claimed.
\end{proof}

\section{Main Results} \label{Main}

We continue all notation from Section \ref{GenFunSec} above.  We begin with a crucial calculation.

\begin{proposition} \label{GUexpand}
We have the following identity of power series:
\begin{align*}
\prod_{d \geq 1} & \left( \sum_{\lambda \in \cP} \delta_{\GU}(\lambda, 2d) u^{|\lambda| d} \right)^{N^*(q; 2d)} \prod_{d \geq 1} \left (\sum_{\lambda \in \cP} \delta_{\GL}(\lambda, d) u^{|\lambda| d} \right)^{M^*(q; d)} \\
& = \frac{1}{1-u} \prod_{i \geq 1} \frac{(1 - u/q^{2i-1})^e}{1 -u^2/q^{2i}} \prod_{1 \leq i < j \atop{ i + j \text{ odd}}} (1 - u^2/q^{i+j})^e,
\end{align*}
where $e = 2$ if $q$ is odd and $e=1$ if $q$ is even.
\end{proposition}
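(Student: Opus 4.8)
The plan is to bring each of the two inner sums on the left-hand side into closed product form using the Schur function identities of Lemma~\ref{schurid}, substitute, and then reassemble the resulting products by means of the self-dual polynomial identities of Lemma~\ref{dualgenfn}; a final elementary simplification, by cancellation and by counting pairs $i<j$ with a fixed sum, then produces the stated right-hand side. For the first inner sum, using the homogeneity \eqref{schurhom} twice (to absorb first $q^{-d|\lambda|}$ and then $u^{|\lambda| d}$ into the Schur function) one obtains
$$\sum_{\lambda \in \cP} \delta_{\GL}(\lambda, d)\, u^{|\lambda| d} = \sum_{\lambda \in \cP} s_{\lambda}\bigl(u^d/q^d,\, u^d/q^{2d},\, u^d/q^{3d},\, \ldots\bigr) = \prod_{i \geq 1}\frac{1}{1 - u^d/q^{di}} \prod_{1 \leq i < j}\frac{1}{1 - u^{2d}/q^{d(i+j)}},$$
the last equality by Lemma~\ref{schurid}(1). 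For the second inner sum, the analogous use of \eqref{schurhom}, now keeping the factor $(-1)^{a(\lambda)}$ and the alternating-sign specialization $(1,\,-1/q^d,\,1/q^{2d},\,\ldots)$, turns it into $\sum_{\lambda}(-1)^{a(\lambda)} s_{\lambda}(x_1, x_2, \ldots)$ with $x_i = (-1)^{i-1} u^d/q^{di}$, so that Lemma~\ref{schurid}(2), together with $(-1)^{i-1}(-1)^{j-1} = (-1)^{i+j}$, yields
$$\sum_{\lambda \in \cP} \delta_{\GU}(\lambda, 2d)\, u^{|\lambda| d} = \prod_{i \geq 1}\frac{1}{1 - (-1)^{i-1} u^d/q^{di}} \prod_{1 \leq i < j}\frac{1}{1 + (-1)^{i+j} u^{2d}/q^{d(i+j)}}.$$

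Next I would substitute these two closed forms into the left-hand side and interchange the (formally convergent) products, collecting for each ``base monomial'' $w$ --- namely $w = u/q^i$ for each integer $i \geq 1$, and $w = u^2/q^{i+j}$ for each pair $1 \leq i < j$ --- the two exponents $N^*(q;2d)$ and $M^*(q;d)$ over all $d \geq 1$. The $\GL$ factor at $w$ always contributes $\prod_{d\geq1}(1-w^d)^{-M^*(q;d)}$, while the $\GU$ factor contributes $\prod_{d\geq1}(1-w^d)^{-N^*(q;2d)}$ when $i$ (respectively $i+j$) is odd and $\prod_{d\geq1}(1+w^d)^{-N^*(q;2d)}$ when $i$ (respectively $i+j$) is even. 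By Lemma~\ref{dualgenfn}(a) the combined contribution at $w$ in the odd case is $(1-w)^e/(1-qw)$, and by Lemma~\ref{dualgenfn}(b) the combined contribution in the even case is $1-w$. Since $q\cdot u/q^i = u/q^{i-1}$ and $q\cdot u^2/q^{i+j} = u^2/q^{i+j-1}$, the left-hand side therefore equals
\begin{align*}
&\prod_{i \text{ odd}} \frac{(1 - u/q^i)^e}{1 - u/q^{i-1}} \cdot \prod_{i \text{ even}}\bigl(1 - u/q^i\bigr) \\
&\qquad \cdot \prod_{\substack{1 \leq i < j \\ i+j \text{ even}}}\bigl(1 - u^2/q^{i+j}\bigr) \cdot \prod_{\substack{1 \leq i < j \\ i+j \text{ odd}}} \frac{(1 - u^2/q^{i+j})^e}{1 - u^2/q^{i+j-1}},
\end{align*}
where in the first two products $i$ runs over all positive integers.

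It remains to simplify this four-fold product. Writing the odd indices as $i = 2k-1$, the first two products collapse: for each $k \geq 2$ the denominator $1 - u/q^{2(k-1)}$ cancels the even-index factor $1 - u/q^{2(k-1)}$, while the $k=1$ denominator $1-u$ remains, so that together they give $\frac{1}{1-u}\prod_{k\geq1}(1 - u/q^{2k-1})^e$. For the last two products, there are exactly $\ell-1$ pairs $i<j$ with $i+j = 2\ell$ and exactly $\ell$ pairs $i<j$ with $i+j = 2\ell+1$ (so that $i+j-1 = 2\ell$), hence
\begin{align*}
\prod_{\substack{1 \leq i < j \\ i+j \text{ even}}}\bigl(1 - u^2/q^{i+j}\bigr) \cdot \prod_{\substack{1 \leq i < j \\ i+j \text{ odd}}}\bigl(1 - u^2/q^{i+j-1}\bigr)^{-1} &= \prod_{\ell \geq 1}\bigl(1 - u^2/q^{2\ell}\bigr)^{(\ell-1)-\ell} \\
&= \prod_{\ell \geq 1}\frac{1}{1 - u^2/q^{2\ell}},
\end{align*}
while the numerators surviving from the odd-sum product are exactly $\prod_{1\leq i<j,\ i+j\text{ odd}}(1 - u^2/q^{i+j})^e$. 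Multiplying the four resulting pieces together yields precisely the right-hand side of the Proposition.

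The one genuinely delicate point is the closed-form evaluation of the unitary sum: one must keep careful simultaneous track of the sign $(-1)^{a(\lambda)}$, of the alternating signs in the specialization $(1, -1/q^d, 1/q^{2d}, \ldots)$, and of their interaction with the two homogeneity rescalings, so as to be sure that each base monomial $w$ falls into the correct case of Lemma~\ref{dualgenfn}. Once the two closed forms are established, the reassembly and final simplification are routine bookkeeping.
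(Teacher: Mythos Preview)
Your proof is correct and follows essentially the same route as the paper: evaluate each inner sum via \eqref{schurhom} and Lemma~\ref{schurid}, regroup the resulting products by base monomial and apply Lemma~\ref{dualgenfn}(a) or (b) according to the parity of $i$ (respectively $i+j$), then telescope. The only cosmetic difference is that in the final simplification the paper cancels the $(1-u^2/q^{i+j-1})^{-1}$ factors against the even-sum factors by noting that the surviving terms come from $j=i+1$, whereas you count that there are $\ell$ pairs with $i+j=2\ell+1$ versus $\ell-1$ pairs with $i+j=2\ell$; these are equivalent.
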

\begin{proof}  First, we have
\begin{align*}
\delta_{\GU}(\lambda, 2d) u^{|\lambda| d} & = (-1)^{a(\lambda)} \left( \frac{u^d}{q^d} \right)^{|\lambda|} s_{\lambda}\left(1, -\frac{1}{q^d}, \left(-\frac{1}{q^d}\right)^2, \ldots\right) \\
& = (-1)^{a(\lambda)} s_{\lambda}\left(\frac{u^d}{q^d}, \frac{u^d}{q^d} \left( -\frac{1}{q^d} \right), \frac{u^d}{q^d} \left( -\frac{1}{q^d} \right)^2, \ldots \right) \\
& = (-1)^{a(\lambda)} s_{\lambda} (u^d/q^d, -u^d/q^{2d}, u^d/q^{3d}, \ldots ), 
\end{align*}
where we have applied \eqref{schurhom}.  Note that the variables of the Schur function $s_{\lambda}$ above are $(-1)^{i-1}u^d/q^{id}$ for $i \geq 1$.  By Lemma \ref{schurid}(2), we then have
\begin{align}
 \sum_{\lambda \in \cP} \delta_{\GU}(\lambda, 2d) u^{|\lambda| d}  &= \sum_{\lambda \in \cP} (-1)^{a(\lambda)} s_{\lambda} (u^d/q^d, -u^d/q^{2d}, u^d/q^{3d}, \ldots)  \nonumber\\
& = \prod_{i \geq 1} \left(1 - (-1)^{i-1} \frac{u^d}{q^{id}}\right)^{-1} \prod_{1 \leq i < j} \left( 1 + (-1)^{i+j} \frac{u^{2d}}{q^{(i+j)d}} \right)^{-1}. \label{Uexp}
\end{align}
Next we have, again by \eqref{schurhom},
\begin{align*}
\delta_{\GL}(\lambda, d)u^{|\lambda|d} & = \left(\frac{u^d}{q^d}\right)^{|\lambda|} s_{\lambda}(1, 1/q^d, 1/q^{2d}, \ldots) \\
& = s_{\lambda}(u^d/q^d, u^d/q^{2d}, u^d/q^{3d}, \ldots).
\end{align*}
Noting that variables of the Schur function are $u^d/q^{id}$, we may apply Lemma \ref{schurid}(1) to obtain
\begin{align}
\sum_{\lambda \in \cP} \delta_{\GL}(\lambda, d) u^{|\lambda| d} & = \sum_{\lambda \in \cP} s_{\lambda} (u^d/q^d, u^d/q^{2d}, u^d/q^{3d}, \ldots) \nonumber \\
& = \prod_{i \geq 1} \left(1 - \frac{u^d}{q^{id}} \right)^{-1} \prod_{1 \leq i < j} \left( 1 - \frac{u^{2d}}{q^{(i+j)d}} \right)^{-1}. \label{Gexp}
\end{align}
Now apply \eqref{Uexp} and \eqref{Gexp} and rearrange factors:
\begin{align}
\prod_{d \geq 1} & \left( \sum_{\lambda \in \cP} \delta_{\GU}(\lambda, 2d) u^{|\lambda| d} \right)^{N^*(q; 2d)} \prod_{d \geq 1} \left (\sum_{\lambda \in \cP} \delta_{\GL}(\lambda, d) u^{|\lambda| d} \right)^{M^*(q; d)} \nonumber\\
&= \prod_{d \geq 1} \left[ \prod_{i \geq 1} \left(1 - (-1)^{i-1} \frac{u^d}{q^{id}}\right) \prod_{1 \leq i < j} \left(1 + (-1)^{i+j} \frac{u^{2d}}{q^{(i+j)d}} \right)\right]^{-N^*(q; 2d)} \nonumber \\
& \quad \cdot \prod_{d \geq 1} \left[ \prod_{i \geq 1} \left(1 - \frac{u^d}{q^{id}}\right) \prod_{1 \leq i < j} \left(1  -\frac{u^{2d}}{q^{(i+j)d}} \right)\right]^{-M^*(q; d)} \nonumber \\
& = \prod_{i \geq 1 \atop{ i \text{ odd}}} \left[ \prod_{d \geq 1} ( 1 - u^d/q^{id})^{-N^*(q; 2d)} (1 - u^d/q^{id})^{-M^*(q; d)} \right] \label{factor1} \\
& \quad \cdot \prod_{i \geq 1 \atop{ i \text{ even}}} \left[ \prod_{d \geq 1} ( 1 + u^d/q^{id})^{-N^*(q; 2d)} (1 - u^d/q^{id})^{-M^*(q; d)} \right] \label{factor2} \\
& \quad \cdot \prod_{1 \leq i < j \atop{ i +j \text{ odd}}} \left[ \prod_{d \geq 1} ( 1 - u^{2d}/q^{(i+j)d})^{-N^*(q; 2d)} (1 - u^{2d}/q^{(i+j)d})^{-M^*(q; d)} \right] \label{factor3} \\
& \quad \cdot \prod_{1 \leq i < j \atop{ i +j \text{ even}}} \left[ \prod_{d \geq 1} ( 1 + u^{2d}/q^{(i+j)d})^{-N^*(q; 2d)} (1 - u^{2d}/q^{(i+j)d})^{-M^*(q; d)} \right]. \label{factor4} 
\end{align}
Apply Lemma \ref{dualgenfn}(1) to \eqref{factor1} with $w=u/q^i$, Lemma \ref{dualgenfn}(2) to \eqref{factor2} with $w =u/q^i$, Lemma \ref{dualgenfn}(1) to \eqref{factor3} with $w=u^2/q^{i+j}$, and Lemma \ref{dualgenfn}(2) to \eqref{factor4} with $w=u^2/q^{i+j}$ to get
\begin{align}
\prod_{d \geq 1} & \left( \sum_{\lambda \in \cP} \delta_{\GU}(\lambda, 2d) u^{|\lambda| d} \right)^{N^*(q; 2d)} \prod_{d \geq 1} \left (\sum_{\lambda \in \cP} \delta_{\GL}(\lambda, d) u^{|\lambda| d} \right)^{M^*(q; d)} \label{almost}\\
&= \prod_{i \geq 1 \atop{ i \text{ odd}}} \frac{(1-u/q^i)^e}{1 - u/q^{i-1}} \prod_{i \geq 1 \atop{ i \text{ even}}} (1 - u/q^i) \nonumber \\
& \quad \quad \cdot \prod_{1 \leq i < j \atop{ i + j \text{ odd}}} \frac{(1 - u^2/q^{i+j})^e}{1 - u^2/q^{i+j-1}} \prod_{1 \leq i < j \atop{ i+j \text{ even}}} (1 - u^2/q^{i+j}). \nonumber
\end{align}
Now note that
$$ \prod_{i \geq 1 \atop{ i \text{ odd}}} \frac{1}{1 -u/q^{i-1}} \prod_{i \geq 1 \atop{ i \text{ even}}} (1 - u/q^i) = \frac{1}{1-u},$$
so 
\begin{equation} \label{almost1} 
\prod_{i \geq 1 \atop{ i \text{ odd}}} \frac{(1-u/q^i)^e}{1 - u/q^{i-1}} \prod_{i \geq 1 \atop{ i \text{ even}}} (1 - u/q^i) = \frac{1}{1-u} \prod_{i \geq 1} (1 - u/q^{2i-1})^e. 
\end{equation}
We also have
$$ \prod_{1 \leq i < j \atop{ i+j \text{ odd}}} \frac{1}{1 - u^2/q^{i+j-1}} \prod_{1 \leq i < j \atop{ i +j \text{ even}}} (1 - u^2/q^{i+j}) = \prod_{i \geq 1} \frac{1}{1 - u^2/q^{2i}},$$
since the only factors which do not cancel are from the first product when $j=i+1$.  Thus
\begin{align} 
\prod_{1 \leq i < j \atop{ i + j \text{ odd}}} & \frac{(1 - u^2/q^{i+j})^e}{1 - u^2/q^{i+j-1}} \prod_{1 \leq i < j \atop{ i+j \text{ even}}} (1 - u^2/q^{i+j}) \label{almost2}\\
& = \prod_{i \geq 1} \frac{1}{1 - u^2/q^{2i}} \prod_{1 \leq i < j \atop{ i + j \text{ odd}}} (1 - u^2/q^{i+j})^e. \nonumber
\end{align}
Finally, substituting \eqref{almost1} and \eqref{almost2} into \eqref{almost} gives the desired identity.
\end{proof}

We are now in the position to prove the main results.

\begin{theorem} \label{main1}
For any prime power $q$, the sum of the degrees of the unipotent characters of $\Sp(2n, \F_q)$, or of $\SO(2n+1, \F_q)$, is $\prod_{i=1}^n (q^{2i} - 1)$ times the coefficient of $u^n$ in the expansion of
$$ \prod_{i \geq 1} \frac{ 1 + u/q^{2i} }{1 - u/q^{2i-1}} \prod_{1 \leq i < j \atop{ i + j \text{ odd}}} \frac{1}{1 - u^2/q^{i+j}}.$$
\end{theorem}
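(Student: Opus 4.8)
The plan is to solve explicitly for the power series $W(u) = \sum_{\Lambda \in \cS} \delta(\Lambda) u^{|\Lambda|}$, because by \eqref{Spdeg} the sum of the degrees of the unipotent characters of $\Sp(2n,\F_q)$, equivalently of $\SO(2n+1,\F_q)$, is precisely $\prod_{i=1}^n(q^{2i}-1)$ times the coefficient of $u^n$ in $W(u)$, and the claimed formula is exactly the assertion that
$$W(u) = \prod_{i\geq1}\frac{1+u/q^{2i}}{1-u/q^{2i-1}}\prod_{1\leq i<j,\, i+j\text{ odd}}\frac{1}{1-u^2/q^{i+j}}.$$
The key structural observation, recorded at the end of Section \ref{Unidegs}, is that each $\delta(\Lambda)$ is a fixed rational expression in $q$ insensitive to the parity of $q$; hence $W(u)$ is literally the same object for every prime power $q$, and it suffices to identify it under the convenient hypothesis that $q$ is odd.

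So I would first take $q$ odd, so that $e=2$, and combine three inputs already in hand. Proposition \ref{GenFunSO} writes the character degree sum of $\SO(2n+1,\F_q)$ as $\prod_{i=1}^n(q^{2i}-1)$ times the coefficient of $u^n$ in the product of the two $\GL$/$\GU$ factors with $W(u)^2$. Proposition \ref{GUexpand} (with $e=2$) evaluates that product of $\GL$/$\GU$ factors as
$$\frac{1}{1-u}\prod_{i\geq1}\frac{(1-u/q^{2i-1})^2}{1-u^2/q^{2i}}\prod_{1\leq i<j,\, i+j\text{ odd}}(1-u^2/q^{i+j})^2.$$
Lemma \ref{SOqodd} gives the same degree sum as $\prod_{i=1}^n(q^{2i}-1)$ times the coefficient of $u^n$ in $\frac{1}{1-u}\prod_{i\geq1}(1+u/q^{2i})^2/(1-u^2/q^{2i})$. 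Since these two expressions for the degree sum agree in every degree $n$, the underlying power series coincide; cancelling the invertible factor $\frac{1}{1-u}\prod_{i\geq1}(1-u^2/q^{2i})^{-1}$ from both sides leaves
$$\Bigl(\prod_{i\geq1}(1-u/q^{2i-1})\prod_{1\leq i<j,\, i+j\text{ odd}}(1-u^2/q^{i+j})\Bigr)^2 W(u)^2 = \Bigl(\prod_{i\geq1}(1+u/q^{2i})\Bigr)^2.$$

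To finish, I would take square roots: both sides are squares of power series with constant term $1$ (for the left side one uses that $(0,\emptyset)$ is the unique symbol of rank $0$, so $\delta((0,\emptyset))=1$ and $W(0)=1$), and among power series with constant term $1$ the square root is unique. This yields the displayed formula for $W(u)$ when $q$ is odd, and hence — since $W(u)$ is the same expression in $q$ for every prime power — for all prime powers $q$. Multiplying the coefficient of $u^n$ by $\prod_{i=1}^n(q^{2i}-1)$ then produces the statement of the theorem.

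I expect the only genuinely delicate point to be the transfer from odd $q$ to all $q$: this is not an analytic limit but rather the fact that the unipotent degree formula \eqref{Spdeg} is the same rational identity in $q$ regardless of $\mathrm{char}(\F_q)$, so that an equality among the $\delta(\Lambda)$ established for odd prime powers — an equality holding for infinitely many values of $q$ — persists as a formal identity. The square-root extraction and the bookkeeping of the cancellations are routine, and no estimate or new combinatorial input is needed beyond Propositions \ref{GenFunSO} and \ref{GUexpand} and Lemma \ref{SOqodd}.
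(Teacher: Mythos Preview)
Your proposal is correct and follows essentially the same approach as the paper: assume $q$ odd, equate the expression from Proposition~\ref{GenFunSO} (combined with Proposition~\ref{GUexpand}) with that of Lemma~\ref{SOqodd}, solve for $W(u)$, and then invoke the parity-independence of $\delta(\Lambda)$ to carry the identity to all prime powers $q$. Your treatment is in fact slightly more explicit than the paper's, since you justify the square-root extraction by noting $W(0)=1$, whereas the paper simply writes ``solving for $W(u)$ yields'' without addressing the sign.
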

\begin{proof}  The generating function which we are calculating is
$$ W(u) = \sum_{\Lambda \in \cS} \delta(\Lambda) u^{|\Lambda|} = \sum_{n \geq 0} \left( \sum_{\Lambda \in \cS_n} \delta(\Lambda) \right) u^n,$$
where $\delta(\Lambda)$ is the expression in $q$ given in Section \ref{Unidegs}.  By Lemma \ref{SOqodd} and Proposition \ref{GenFunSO}, we have that when $q$ is odd
\begin{align*}
\prod_{d \geq 1} & \left( \sum_{\lambda \in \cP} \delta_{\GU}(\lambda, 2d) u^{|\lambda| d} \right)^{N^*(q; 2d)} \prod_{d \geq 1} \left (\sum_{\lambda \in \cP} \delta_{\GL}(\lambda, d) u^{|\lambda| d} \right)^{M^*(q; d)} W(u)^2 \\
& = \frac{1}{1-u} \frac{\prod_{i \geq 1} (1 + u/q^{2i})^2}{\prod_{i \geq 1} (1 - u^2/q^{2i})}.
\end{align*}
By substituting in the expression from Proposition \ref{GUexpand} with $q$ odd (so $e=2$), we have
$$\frac{W(u)^2}{1-u} \prod_{i \geq 1} \frac{(1 - u/q^{2i-1})^2}{1 -u^2/q^{2i}} \prod_{1 \leq i < j \atop{ i + j \text{ odd}}} (1 - u^2/q^{i+j})^2 =  \frac{1}{1-u} \frac{\prod_{i \geq 1} (1 + u/q^{2i})^2}{\prod_{i \geq 1} (1 - u^2/q^{2i})}.$$
Solving for $W(u)$ yields
$$W(u) = \prod_{i \geq 1} \frac{1 + u/q^{2i}}{1-u/q^{2i-1}} \prod_{1 \leq i < j \atop{ i + j \text{ odd}}} \frac{1}{1 - u^2/q^{i+j}}.$$
Since each $\delta(\Lambda)$ is an expression in $q$ which does not depend on the parity of $q$, then this expression for $W(u)$ holds for any prime power $q$.  Since the unipotent characters for $\Sp(2n,\F_q)$ and $\SO(2n+1,\F_q)$ are parametrized by the same symbols, and have the same corresponding degrees, then $W(u)$ is a generating function for the sum of the degrees of the unipotent characters of $\SO(2n+1,\F_q)$ as well.
\end{proof}

Note that it follows from \eqref{Uexp} that the sum of the degrees of the unipotent characters of $\GU(n, \F_q)$ is $\prod_{i = 1}^n (q^i - (-1)^i)$ times the coefficient of $u^n$ in 
$$\prod_{i \geq 1} \frac{1}{1 - (-1)^{i-1} u/q^{i}} \prod_{1 \leq i < j} \frac{1}{1 + (-1)^{i+j} u^{2}/q^{i+j}},$$
and from \eqref{Gexp} the sum of the degrees of the unipotent characters of $\GL(n, \F_q)$ is $\prod_{i = 1}^n (q^i - 1)$ times the coefficient of $u^n$ in 
$$\prod_{i \geq 1} \frac{1}{1 - u/q^{i}} \prod_{1 \leq i < j} \frac{1}{1 -  u^{2}/q^{i+j}},$$
and these are each direct consequences of the Schur function identities in Lemma \ref{schurid}.  It would of course be satisfying to have such a direct combinatorial proof of Theorem \ref{main1}.

\begin{theorem} \label{main2}
Let $q$ be a power of $2$, and $G = \Sp(2n,\F_q)$.  Then $\varepsilon(\chi) = 1$ for every $\chi \in \Irr(G)$, that is, every complex irreducible representation of $G$ may be defined over the real numbers.
\end{theorem}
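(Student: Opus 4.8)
The plan is to apply Lemma~\ref{Speven}: since $q$ is a power of $2$, it suffices to show that the character degree sum of $\Sp(2n,\F_q)$ equals $\prod_{i=1}^n(q^{2i}-1)$ times the coefficient of $u^n$ in $\frac{1}{1-u}\prod_{i\geq 1}\frac{1+u/q^{2i}}{1-u^2/q^{2i}}$. Because $q$ is even we may use the isomorphism $\Sp(2n,\F_q)\cong\SO(2n+1,\F_q)$, so this left-hand side is the character degree sum of $\SO(2n+1,\F_q)$, for which Proposition~\ref{GenFunSO} (with $e=1$) provides a generating function. The whole proof then reduces to simplifying that generating function.

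First I would substitute Proposition~\ref{GUexpand} with $e=1$, which rewrites the product over $\cN'(q)$ and $\cM(q)$ as
\[
\frac{1}{1-u}\prod_{i\geq 1}\frac{1-u/q^{2i-1}}{1-u^2/q^{2i}}\prod_{\substack{1\leq i<j\\ i+j\text{ odd}}}(1-u^2/q^{i+j}).
\]
Next I would substitute Theorem~\ref{main1} for the remaining factor $W(u)^e=W(u)$, namely
\[
W(u)=\prod_{i\geq 1}\frac{1+u/q^{2i}}{1-u/q^{2i-1}}\prod_{\substack{1\leq i<j\\ i+j\text{ odd}}}\frac{1}{1-u^2/q^{i+j}}.
\]
Multiplying the two expressions, the factor $\prod_{i\geq 1}(1-u/q^{2i-1})$ cancels against its reciprocal coming from $W(u)$, and likewise the product over $1\leq i<j$ with $i+j$ odd of $(1-u^2/q^{i+j})$ cancels against its reciprocal from $W(u)$, leaving exactly $\frac{1}{1-u}\prod_{i\geq 1}\frac{1+u/q^{2i}}{1-u^2/q^{2i}}$. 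Hence the character degree sum of $\SO(2n+1,\F_q)\cong\Sp(2n,\F_q)$ is $\prod_{i=1}^n(q^{2i}-1)$ times the coefficient of $u^n$ in this series, which is precisely the generating function appearing in Lemma~\ref{Speven}, and that lemma then yields $\varepsilon(\chi)=1$ for every $\chi\in\Irr(\Sp(2n,\F_q))$.

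I do not expect a genuine obstacle at this stage: all of the substantive work has already been carried out in Propositions~\ref{GenFunSO} and~\ref{GUexpand} and Theorem~\ref{main1} (which in turn rests on Gow's theorem for $\SO(2n+1,\F_q)$ with $q$ odd via Lemma~\ref{SOqodd}, the Schur function expansions, and the parity-independence of the $\delta(\Lambda)$). The only points requiring a word of care are that the formula for $W(u)$, derived under the hypothesis that $q$ is odd, is legitimately reused here because each $\delta(\Lambda)$ is the same rational expression in $q$ regardless of $\mathrm{char}(\F_q)$, and that the isomorphism $\SO(2n+1,\F_q)\cong\Sp(2n,\F_q)$ for $q$ even transports the character degree sum verbatim.
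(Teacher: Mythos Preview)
Your proposal is correct and matches the paper's proof essentially step for step: you invoke the isomorphism $\Sp(2n,\F_q)\cong\SO(2n+1,\F_q)$ for $q$ even, apply Proposition~\ref{GenFunSO} with $e=1$, substitute Proposition~\ref{GUexpand} and Theorem~\ref{main1}, observe the cancellations, and finish with Lemma~\ref{Speven}. Your remarks about the parity-independence of $\delta(\Lambda)$ and the role of the isomorphism are exactly the points the paper relies on as well.
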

\begin{proof} When $q$ is a power of $2$, we have $G =\Sp(2n,\F_q) \cong \SO(2n+1, \F_q)$, and so by Proposition \ref{GenFunSO} the character degree sum of $G$ (with $e=1$ since $q$ is even) is $\prod_{i = 1}^n (q^{2i} - 1)$ times the coefficient of $u^n$ in the power series
$$\prod_{d \geq 1} \left( \sum_{\lambda \in \cP} \delta_{\GU}(\lambda, 2d) u^{|\lambda| d} \right)^{N^*(q; 2d)} \prod_{d \geq 1} \left (\sum_{\lambda \in \cP} \delta_{\GL}(\lambda, d) u^{|\lambda| d} \right)^{M^*(q; d)} W(u).$$
Substitute the expression from Proposition \ref{GUexpand} for the first two products (with $e=1$) and the expression for $W(u)$ from Theorem \ref{main1} to find that the character degree sum of $G$ is $\prod_{i=1}^n (q^{2i}-1)$ times the coefficient of $u^n$ in the expansion of 
$$\frac{1}{1-u} \prod_{i \geq 1} \frac{1 - u/q^{2i-1}}{1 -u^2/q^{2i}} \prod_{1 \leq i < j \atop{ i + j \text{ odd}}} (1 - u^2/q^{i+j}) \prod_{i \geq 1} \frac{ 1 + u/q^{2i} }{1 - u/q^{2i-1}} \prod_{1 \leq i < j \atop{ i + j \text{ odd}}} \frac{1}{1 - u^2/q^{i+j}}$$
$$ = \frac{1}{1 - u} \prod_{i \geq 1} \frac{ 1 + u/q^{2i}}{ 1 - u^2/q^{2i}}.$$
The result now follows from Lemma \ref{Speven}
\end{proof}

\end{document}